\chardef\bslash=`\\ 
\def\verbatim{\interlinepenalty\@M \@verbatim
\leftskip\@totalleftmargin\advance\leftskip1pc
\frenchspacing\@vobeyspaces \@xverbatim} \makeatother \hfuzz1pc
\def\dgt@k{\dg@DX=-3 \dg@DY=2 \dg@SIZE=3}
\def\dgt@kk{\dg@DX=3 \dg@DY=-1 \dg@SIZE=3}
\theoremstyle{plain} \newtheorem{thm}{Theorem}[section]
\theoremstyle{definition}
\begin{document}

\title[]
{There is no universal proper metric spaces for asymptotic dimension 1}

\author{Mykhailo Zarichnyi}
\address{Department of Mechanics and Mathematics, Ivan Franko National University of Lviv, 1 Universytrtska Str, 79000 Lviv, Ukraine}
\email{zarichnyi@yahoo.com}
\thanks{}
\subjclass[2020]{54E35, 54F45} 
\keywords{}


\begin{abstract} Answering a question of Ma, Siegert, and Dydak we show that there is no universal proper metric space for the asymptotic dimension $n\ge1$.
\end{abstract}

\maketitle
\section{Introduction}

The notion of asymptotic dimension is introduced by Gromov \cite{Gr}. A family $\mathcal A$ of subsets of a metric space $X$ is called uniformly bounded if there is $M>0$ such that $\mathrm{diam}(A)\le M$, for every $A\in\mathcal A$. Given $D>0$, we say that a family $\mathcal A$ is $D$-discrete if $d(A,B)=\inf\{d(a,b)\mid a\in A, b\in B\}\ge D$ for every distinct $A,B\in\mathcal A$.

We say that the asymptotic dimension of $X$ is $\le n$ (written $\mathrm{asdim}\, X\le n$) if for every $D>0$ there exists a uniformly bounded cover $\mathcal U$ of $X$ such that $\mathcal U=\cup_{i=0}^n\mathcal U_i$, where every $\mathcal U_i$, $i=0,1,\dots,n$, is $D$-discrete.

Universal spaces for the asymptotic dimensions $\le n$ (we do not provide here a precise definition) are constructed in \cite{Dr} and \cite{BN}.

Recall that a metric space $X$ is proper if every closed ball in it is compact.
It is proved in \cite[Theorem 7.2]{MSD} that there exists a countable proper ultrametric space  $PU$ such that any  proper metric space $X$ of asymptotic dimension 0 coarsely embeds (see the definition below) in $PU$. In other words, there is a universal proper metric space of asymptotic dimension 0. 

Note that universal spaces for asymptotic dimension are constructed in \cite{DZ,BN}.

The following problem is formulated in \cite{MSD}: Given $n \ge 1$ is there a universal space in the class of proper metric spaces of asymptotic dimension at most $n$?

The aim of this note is to provide a negative answer.

\section{Preliminaries}

A metric space $X$ is called geodesic if, for every $x,y\in X$ there is an isometric embedding $\alpha\colon [0,d(x,y)]\to X$ such that $\alpha(0)=x$, $\alpha(d(x,y))=y$.

A map $f\colon X\to Y$ is called asymptotically Lipschitz if there exist $\lambda,s>0$ such that, for any $x,y\in X$,  $d(f(x),f(y))\le \lambda d(x,y)+s$.

A map $f\colon X\to Y$ is called coarsely uniform if there exists a non-increasing function $\phi\colon \mathbb R_+\to\mathbb R_+$ such that $\lim_{t\to\infty}\phi(t)=\infty$ and
$$d_Y(f(x),f(y))\le \phi(d_X(x,y)), \ \ x,y\in X.$$

In \cite{Dr} it is proved that any coarse uniform map defined on a geodesic metric space is asymptotically Lipschitz.

A coarse uniform map $f\colon X\to Y$ is called a coarse embedding if there exists a non-increasing function $\psi\colon \mathbb R_+\to\mathbb R_+$ such that $\lim_{t\to\infty}\psi(t)=\infty$ and
$$d_Y(f(x),f(y))\ge \phi(d_X(x,y)), \ \ x,y\in X.$$

Given $D>0$, we say that a subset $A$ of a metric space $X$ is $D$-discrete if $d(x,y)\ge D$ for all $x,y\in A$ with $x\neq y$.

Recall that a tree is a connected graph without cycles. We regard any connected graph as a metric space endowed with the geodesic metric. Every edge is assumed to be isometric to the unit line segment.

Given a metric space $(X,d)$ and $\alpha>0$, we define $\alpha X$ to be the metric space $(X,\alpha d)$.

By $B_r(x)$ we denote the ball of radius $r>0$ centered at $x$.

\section{Result}

\begin{thm} There is no proper metric space $X$ with the property that every proper metric space of asymptotic dimension $\le1$ admits a coarse embedding in $X$.
\end{thm}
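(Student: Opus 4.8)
The plan is to argue by contradiction: suppose a universal proper metric space $X$ exists. The strategy is to produce a family of "test spaces" of asymptotic dimension $\le 1$ whose coarse embeddings into $X$ would force $X$ to contain arbitrarily large "branching" structure at every scale, and then show that properness of $X$ (compactness of balls, hence a uniform bound on how many $D$-separated points fit in a ball of given radius) prevents this. The most natural test spaces are trees, or rather scaled disjoint unions of trees. A single tree has asymptotic dimension $\le 1$, and trees can be made to branch arbitrarily much; rescaling by $\alpha$ lets me tune the geometry to defeat any fixed pair of control functions $\phi,\psi$ that a hypothetical embedding into $X$ would have to satisfy.

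**The key steps, in order.** First I would fix the supposed universal $X$ and recall that a coarse embedding $f\colon T\to X$ comes with control functions: an upper bound $d_X(f(a),f(b))\le \phi(d_T(a,b))$ and a lower bound $d_X(f(a),f(b))\ge \psi(d_T(a,b))$ with $\psi(t)\to\infty$. Crucially, these functions are \emph{not} uniform across all test spaces, so I must build a single proper space $Y$ of $\operatorname{asdim}\le 1$ into which $X$'s universality is used, \emph{or} build a sequence of trees and pass to a limit. The cleaner route is to assemble one proper metric space $Y$ as a disjoint union $Y=\bigsqcup_{k} \alpha_k T_k$ of rescaled finite (or locally finite) trees $T_k$, placed far apart so that $Y$ is proper and still has $\operatorname{asdim} Y\le 1$ (a disjoint union of spaces of asymptotic dimension $\le 1$, with components receding to infinity, again has asymptotic dimension $\le 1$). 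Then a single coarse embedding $f\colon Y\to X$ restricts to each component with one global pair $\phi,\psi$. Next, inside each $\alpha_k T_k$ I would locate, at a controlled scale, a large set of points that are pairwise $\approx R_k$-separated in $Y$ (vertices of a highly branched tree spread out at scale $R_k$); the lower control function forces their images to be pairwise $\ge \psi(R_k)$-separated in $X$, while the upper control function forces them all to lie in a ball in $X$ of radius $\le \phi(\operatorname{diam})$ depending on the tree's diameter.

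**The main obstacle** is the counting/packing contradiction: I must choose the branching numbers and the scalings $\alpha_k$ (equivalently $R_k$ and the tree diameters) so that the number of pairwise $\psi(R_k)$-separated points forced into a bounded ball of $X$ grows without bound, contradicting the fact that in a proper metric space every ball is compact and therefore contains only finitely many points that are $\delta$-separated for each fixed $\delta>0$. The delicate point is that $\psi$ and $\phi$ are unknown until $X$ is fixed, so I cannot hard-code the scalings in advance; the resolution is to make the single space $Y$ rich enough — for every pair of candidate control functions and every radius there is a component $\alpha_k T_k$ that is "too branched" for that pair — so that \emph{whatever} $\phi,\psi$ the embedding of $Y$ turns out to have, some component overflows some ball of $X$. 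Concretely, one diagonalizes over a countable dense family of possible control data while keeping $\operatorname{asdim} Y\le 1$ and $Y$ proper.

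**The remaining steps** are then routine verifications: that $Y$ as constructed is proper (each closed ball meets only finitely many components, each of which is a locally finite tree with compact balls), that $\operatorname{asdim} Y\le 1$ (since each tree has asymptotic dimension $\le 1$ and the components are $D$-far apart for every $D$ beyond some index), and that the packing estimate in $X$ indeed yields a finite cardinality bound on $\delta$-separated subsets of any ball, which is exactly where properness of $X$ is used. Assembling these gives the contradiction and hence the theorem.
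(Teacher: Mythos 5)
Your high-level strategy (packing in balls of $X$ versus branching in trees of asymptotic dimension $1$, with a diagonalization to handle the unknown control functions) is the right one and matches the paper's in spirit, but two steps in your plan have genuine gaps. First, properness of $X$ does \emph{not} give ``a uniform bound on how many $D$-separated points fit in a ball of given radius'': it gives a finite bound for each individual ball, but the bound is not uniform over centers (that uniformity is exactly the \emph{bounded geometry} condition, which the paper deliberately separates out and leaves as an open question in its final section). So you must anchor all your counting at a single basepoint $x_0$ and work with the function $\Phi(r)=\max\{|A| : A\subset B_r(x_0),\ A \text{ 1-discrete}\}$. Your construction places the image of the $k$-th separated set in a ball of radius $\phi(\operatorname{diam})$ centered at $f(\mathrm{root}_k)$, a point that recedes to infinity; to recenter at $x_0$ you must pay $\phi(d_Y(y_0,\mathrm{root}_k))$, which brings in the unknown $\phi$ at large scales.

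This feeds into the second, more serious gap. Because your test space $Y=\bigsqcup_k \alpha_k T_k$ is disconnected, a coarse embedding of $Y$ need only satisfy an upper control by an \emph{arbitrary} nondecreasing function $\phi$, and there is no countable family of such functions that eventually dominates all of them (given any countable family $\{g_i\}$, the function $\phi(r)=1+\max_{i\le r}g_i(r)$ escapes it). So ``diagonalizing over a countable dense family of possible control data'' cannot be carried out: for any choice of branching numbers $N_k$ and positions $D_k$ fixed in advance, one can exhibit a monotone $\phi$ with $\Phi\bigl(\phi(D_k)+\phi(\Delta_k)\bigr)\ge N_k$ for all $k$, so your counting inequality never closes. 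The paper's essential device, absent from your outline, is to make the test space a single \emph{geodesic} tree $S$ (the rescaled branching trees $nT_1$ are all attached along one ray): a coarsely uniform map on a geodesic space is automatically asymptotically Lipschitz, so the upper control is linear, the relevant radii become $mn+m$, and the countable family $\{\Phi(nr+n)\}_{n\in\mathbb N}$ really can be diagonalized against by a single branching function $\Psi$. Without connectivity (or some substitute forcing linear upper control), the argument as you describe it does not go through.
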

\begin{proof}
Suppose the contrary and let $X$ be such a space. Let $x_0\in X$ be a base point.

For $r>0$, define $$\Phi(r)=\max\{|A|\mid A\subset B_r(x_0)\text{ is a 1-discrete subset}\}.$$ Because of properness of $X$, $\Phi(r)$ is well-defined.

For every $n\in \mathbb N$, define $\Phi_n(r)=\Phi(nr+n)$, for all $r>0$. There exists a nondecreasing function $\Psi\colon [0,\infty)\to[0,\infty)$ satisfying the property: $\lim_{r\to\infty}(\Psi(r)/\Phi_n(r))=\infty$, for any $n\in \mathbb N$.

Let $T$ be a rooted tree with the root $t_0$. For any $n\in\mathbb N$, define $\Xi(n)=|\{t\in T\mid d(t,t_0)=n\}|$.

For every $n\in \mathbb N$, attach $\Psi(n)$ unit segments to $n-1\in\mathbb R_+$ each by one of its endpoints. The obtained rooted tree (the root is $0\in\mathbb R_+$) is denoted by $T_1$. We see that $\Xi(n)\ge \Psi(n)$.

Let $S$ be the tree which is obtained by attaching, for any $n\in\mathbb N$, a copy of $T_n=nT_1$ to $n-1\in\mathbb R_+$ by its root, which we denote by $y_n$. Note that $S$ is a geodesic metric space. For the sake of convenience, any point $k\in T_n\subset S$ will be denoted by $k_{T_n}$.

We are going to show that there is no coarse embedding of $S$ into $X$. Suppose the contrary, let $f\colon S\to X$ be such an embedding. Since $S$ is geodesic, the map $f$ is asymptotically Lipschitz. Let $\lambda , s$ be the corresponding constants from the definition of the asymptotically Lipschitz map.

Since $f$ is a coarse embedding, there exists $k\in\mathbb N$ such that $d_X(f(x),f(y))\ge1$ whenever $d_S(x,y)\ge k$. We obtain $$d_X(x_0, f(n_{T_k}))\le d_X(x_0,f(n_{T_k}))+d_X(f(0_{T_k}),n_{T_k})\le d_X(x_0,f(n_{T_k}))+\lambda kn+s\le mn+m,$$
for a constant $m\in \mathbb N$ large enough.

We conclude that $\Psi(n)\le \Xi(n)\le \Phi (mn+m)$ for all $n\in\mathbb N$, which contradicts to the choice of $\Psi$.

\end{proof}

\section{Remarks}
A metric space is said to have bounded geometry if for every $R > 0$ there exists $C<\infty$ such that every 1-discrete set contained in a ball of radius $R$ is   of cardinality at most $C$.

Given $n \ge 1$, is there a proper metric into which every proper metric space of bounded geometry and of asymptotic dimension at most $n$ can be coarsely embedded?

\end{document}